\newcommand{\compactlist}{\begin{list}{$\bullet$}{\setlength{\leftmargin}{1em}}}
\def\fr{{\sf{fr}}}
\def\F{{\mathbb F}}
\def\cc{{\mathbb C}}
\def\pp{{\mathbb P}}
\def\call{\mathcal{L}}
\def\calk{\mathcal{K}}
\def\cals{\mathcal{S}}
\def\calk{\mathcal{K}}
\def\cs{\mathbin{\#}}
\newcommand{\spinc}{\ifmmode{{\mathfrak s}}\else{${\mathfrak s}$\ }\fi}
\newcommand{\spinct}{\ifmmode{{\mathfrak t}}\else{${\mathfrak t}$\ }\fi}
\newcommand{\fig}[2] { \includegraphics[scale=#1]{#2} }
\newtheorem{theorem}{Theorem}[section]
\theoremstyle{definition}
\newtheorem{definition}[theorem]{Definition}
\begin{document}
\title{Null-homologous  unknottings}
\author{Charles Livingston}
\thanks{This work was supported by a grant from the National Science Foundation, NSF-DMS-1505586.}
\address{Charles Livingston: Department of Mathematics, Indiana University, Bloomington, IN 47405 }
\email{livingst@indiana.edu}


\begin{abstract}   Every knot can be unknotted with two generalized twists; this was first proved by Ohyama.  Here we prove that any knot of genus $g$ can be unknotted with $2g$ null-homologous twists and that there exist genus $g$ knots that cannot be unknotted with fewer than $2g$ null-homologous twists.  

\end{abstract}

\maketitle


\section{Introduction}

Figure~\ref{fig7} illustrates an  operation that can be performed on a knot,  twisting a set  of parallel strands. In this example, orientations are shown and the linking number of the twist is one in absolute value.  In  1994, Ohyama~\cite{MR1297516} proved the unexpected result that  every knot can be unknotted with two twists.  Here we will give an alternative perspective on the proof of this theorem  and use this to show that   every knot of three-genus $g = g(K)$ can be unknotted using $2g$ null-homologous twists, that is, with all twists having linking number 0.  It will also be shown that $2g(K)$ is the best possible bound.

\begin{figure}[h]
\fig{.5}{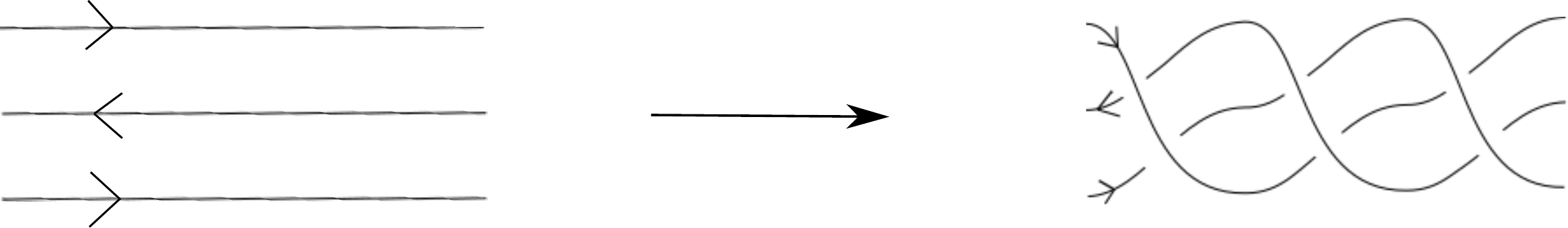}
\caption{A twist operation on a portion of a knot.}
\label{fig7}
\end{figure}
\vskip.05in

There is extensive literature concerning the general problem of unknotting.  Some of this research concerns the question of which knots can be unknotted with a single operation that  consists of performing many full twists with no constraint on the linking number; a small sample of   papers on this topic includes~\cite{MR1177414, MR2233011, MR1282760, MR1194998}.   More generally, one can consider an equivalence relation on knots generated by twisting, with constraints on the linking number and number of twists; the starting points of research on this topic is a paper by Fox~\cite{MR0131871}, and a few later papers  include~\cite{MR3544442,MR881755,MR1049833}.  

This work is closely related to the study of  classes of unknotting operations; a good initial reference is~\cite{MR1075165}.  Our approach is based on simplifying a Seifert surface for the knot; one can also consider the problem of simplifying closed surfaces and compact three-manifolds embedded  in $S^3$ using twisting operations.  Research along these lines includes~\cite{MR3676048, MR3807598}, in which similar techniques to those used here appear.   Geometric operations on a knot correspond to algebraic operations on its Seifert matrix, and this in turn leads to the notion of algebraic unknotting.  See, for instance,~\cite{MR1043226, MR1727885, 2017arXiv170910269B}.

Our focus  on linking number 0 arose because of its relationship to problems in four-dimensional aspects of knot theory: if a knot $K$ can be unknotted with $k$ positive and $j$ negative null-homologous full twists, then $K$ bounds a null-homologous disk embedded in the punctured connected sum $\#_k \cc \pp^2 \#_j \overline{\cc \pp}^2$.  This is a topic that has received considerable attention; a few early references include~\cite{MR603768, MR0246309, MR1195083, MR0248851,MR852974, MR561245}. From the four-dimensional perspective, this in turn leads to the question of converting a knot into a knot with Alexander polynomial 1 (which would then be topologically slice~\cite{MR679066}); this is explored, for instance, in~\cite{2017arXiv170910269B}.

\vskip.05in

\noindent{\it Acknowledgement}\ \  In~\cite{MR1936979},  the main result of this paper was stated without proof.    Recently,  Maciej Borodzik and Lukas Lewark, who needed  the result,   noted that a follow-up paper promised in~\cite{MR1936979}  never appeared.  Maciej and Lukas also had outlined their own proof of the desired result.  Here we present the argument.   Thanks are also due to Makoto Ozawa and Akira Yasuhara for valuable comments and for identifying important references.

\vskip.05in

\section{Surgery descriptions of knots and unknotting}\label{sec:surgery}

 Throughout this paper, we will use surgery descriptions of   knots, links, and their cyclic covering spaces.  A basic reference  is the text by Rolfsen~\cite[Chapter 9H]{MR0515288}.   More details can be found in~\cite{MR1707327} and original sources such as~\cite{MR0467753}.

An $n$--component {\it framed}  link $L$ in a three-manifold $M$ is the isotopy class of an oriented embedded copy of the disjoint union of  $n$ copies of $S^1$  in $M$ along with the choice of an  isotopy class of a nowhere vanishing section of the normal bundle to $L$. For oriented links in $S^3$, linking numbers define a natural correspondence between framings on each component and integers.

In the introduction we   described the general twisting operation.  We now make this formal in a way that facilitates the proof of the unknotting theorem.

A surgery diagram for a  framed $n$--component  link $L$ in a three-manifold $M$ consists of an oriented link  $L_\cals = \{\call_1, \ldots , \call_n, \cals_1, \ldots, \cals_k\}$  in $S^3$   for which each component  has  an  integer framing; it should have the property that $\cals = \{ \cals_1, \ldots, \cals_k\}$ is  a surgery description of $M$ and  $\call = \{ \call_1, \ldots, \call_n\}$   represents  the framed link $L$.   The framing of $\call_i$ is denoted $\fr(\call_i)$.

For our work, the key result concerning modifications of surgery diagrams states that two such diagrams represent isotopic links in $M$ if they are related by a sequence of three types of moves along with the inverse of the second move:  (1)  diagram isotopy; (2) removing   an unknotted component $\cals_j$ with framing $\epsilon = \pm 1$ and adding a full twist to the remaining strands of $L_\cals$ that pass through $\cals_j$, twisting left or right depending on whether $\epsilon = 1$ or $\epsilon = -1$, respectively; (3) {\it sliding} a component of $L_\cals$ over an  $\cals_i$ (other than itself), adjusting framings appropriately.  If $\epsilon =1$, the second move, {\it blowing down} $\cals_j$, decreases   the framing of each remaining component by the square of the linking number; if  $\epsilon = -1$, then it increases framings by the square of the linking number.   Details are presented in the references;  in the work below we will summarize  how framings can be tracked using associated linking matrices.

\begin{definition} A link $L \subset S^3$ can be {\it unlinked} with $k$ twists if there is a surgery diagram of $L$,   in which  both $\{\call_1, \ldots , \call_n\}$ and $\{\cals_1, \ldots , \cals_k \}$ are unlinks.  The unlinking is called {\it null-homologous} if each $\cals_i$ is null-homologous in the complement of $\call$.
\end{definition}
Note that the choice of framing for $\call$ is not relevant in this definition.  Also note  that in a surgery diagram for $S^3$ for which   $\cals$ is an unlink, all framings are necessarily $\pm 1$. Thus, this definition of unlinking corresponds  to the  definition that involves applying twists to a standard diagram of $L$.


\section{The Light Bulb Trick}

A well-known result, called {\it The Light Bulb Trick}, states that any two (unoriented)  knots in $S^1 \times S^2$, each of which meets a nonseparating two-sphere in exactly one point, are isotopic.  There is a simple generalization for connected sums of $S^1 \times S^2$  which quickly yields the following result.  To set up notation for the proof, in the statement of the theorem we make precise the notion that a subset of   $\cals$ forms a set of meridians for some of the $\call_i$.

\begin{theorem}\label{thm:unlink} Let  $L \subset S^3$ be  a link with surgery diagram $ \{\call_1, \ldots , \call_n, \cals_1, \ldots, \cals_k\}$.   Suppose that for some $m \le \min\{n,k\}$, the link $\{\cals_1, \ldots , \cals_m\}$ has all framings 0 and bounds a set of disjoint disks $D_i$ in $S^3$ such that $D_i \cap \left( \call_1\cup \ldots \cup \call_n \right)$ consists of exactly one point, and that   intersection point is  on $\call_i$.  Then $L$ has a surgery diagram  $$L =  \{\call_1', \ldots , \call_n', \cals_1, \ldots, \cals_k\} = \{ \call', \cals\}$$ for which   $\{\call_1', \ldots , \call_m'\}$ is an unlink bounding a set of disjoint disks with interiors in the  complement of   $\call'$.  Furthermore, it can be arranged that if $f_i$ is an arbitrary set of integers satisfying $f_i = \fr( \call_i) \mod 2$, then $\fr(\call_i') = f_i$
\end{theorem}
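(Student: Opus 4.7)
The plan is to pass from $S^3$ to a simpler $3$--manifold by doing $0$-surgery on the distinguished framed circles $\cals_1,\ldots,\cals_m$, apply the Light Bulb Trick there to simplify the $\call_i$, and then translate the resulting ambient isotopy back to the $S^3$ surgery diagram as a sequence of Kirby moves.

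First I perform $0$-surgery on $\cals_1,\ldots,\cals_m$ only, leaving $\cals_{m+1},\ldots,\cals_k$ and all of $\call$ in place, and call the resulting $3$--manifold $M_0$.  Because the disjoint disks $D_1,\ldots,D_m$ given by hypothesis show that $\{\cals_1,\ldots,\cals_m\}$ is an unlink in $S^3$, we have $M_0\cong \#^m(S^1\times S^2)$.  Each $D_i$, capped off by the core disk of the $2$--handle attached along $\cals_i$, becomes an embedded non-separating $2$--sphere $\Sigma_i\subset M_0$ which meets $\call_i$ transversely in one point and is disjoint from every other $\call_j$, from every $\cals_j$ with $j>m$, and from the other $\Sigma_\ell$.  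The original surgery diagram now reads, inside $M_0$, as $\{\call,\cals_{m+1},\ldots,\cals_k\}$ and represents the same framed link $L$.

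Next I apply the extension of the Light Bulb Trick to $\#^m(S^1\times S^2)$ quoted at the beginning of this section:  since each $\call_i$ meets the non-separating sphere $\Sigma_i$ transversely once and is disjoint from the other $\Sigma_\ell$, there is an ambient isotopy of $M_0$ that carries $\call_i$ to a small meridian $\mu_i$ of $\cals_i$ lying in an arbitrarily thin tubular neighborhood $N_i$ of $\cals_i\subset S^3\subset M_0$.  Choosing the $N_i$ thin enough that they are pairwise disjoint, disjoint from $\cals_j$ for $j>m$, and disjoint from the (finitely many) components $\call_j'$, $j>m$, in their final positions, the resulting $\call_1',\ldots,\call_m'$ are disjoint unknotted meridians bounding disjoint meridian disks $\Delta_i\subset N_i\subset S^3$, each $\Delta_i$ meeting $\cals$ only in the one puncture on $\cals_i$ and having interior disjoint from all other components of $\call'$.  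Undoing the $0$--surgeries returns the picture to $S^3$ and yields a valid surgery diagram $\{\call',\cals\}$ for $L$ with the required unlinking property.

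To control the framings, I would decompose each isotopy into finitely many elementary steps in general position: ambient isotopies missing $\Sigma_1\cup\cdots\cup\Sigma_m$, which pull back to isotopies in $S^3\setminus(\cals_1\cup\cdots\cup\cals_m)$, and single transverse passages of $\call_i$ through some $\Sigma_j$, each of which translates in $S^3$ to a handle slide of $\call_i$ over $\cals_j$.  Because every such $\cals_j$ has framing $0$, each slide changes $\fr(\call_i)$ by twice a linking number, so $\fr(\call_i) \bmod 2$ is preserved.  Given any $f_i$ with $f_i\equiv \fr(\call_i)\pmod 2$, the prescribed value is then reached by inserting additional handle slides of $\call_i'$ over $\cals_i$ along a $0$--framed pushoff of $\cals_i$; each such slide shifts $\fr(\call_i')$ by $\pm 2$ and replaces $\call_i'$ by a band sum that is again an unknotted meridian of $\cals_i$ bounding a disk in $N_i$ with interior disjoint from $\call'$.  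The principal obstacle I anticipate is carrying out the multi-component Light Bulb isotopy so that all $\call_i'$ simultaneously end up as such standard meridians with pairwise disjoint meridian disks and with the other components cleanly out of the way; once this geometric bookkeeping is arranged, the Kirby translation and the framing adjustment are routine.
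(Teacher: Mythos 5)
Your proposal is essentially the paper's proof viewed from upstairs: the paper realizes the generalized Light Bulb Trick directly in the Kirby diagram, where each pass of a strand through the sphere $\Sigma_i$ appears as a handle slide over the $0$--framed $\cals_i$ (and is recorded there as a crossing change between $\call_j$ and $\call_i$), and your step ``decompose the ambient isotopy into isotopies missing the belt spheres plus transverse passages'' is exactly that dictionary run in reverse. Two points deserve attention. First, the framing adjustment as written is not correct: the band sum of the small meridian $\call_i'$ with a $0$--framed pushoff of $\cals_i$ is, for a short band, a $(1,1)$--curve on $\partial N_i$, homologous to the core of $N_i$, and so it does \emph{not} bound a disk in $N_i$ as you claim. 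The paper's fix is to first introduce a Reidemeister I kink in $\call_i'$ and then use the slide over $\cals_i$ to change that self-crossing; the composite move is an isotopy of the underlying curve (so the unlink-with-disjoint-disks conclusion is undisturbed) while the framing shifts by $2$. With that substitution your framing argument goes through. Second, when you decompose the ambient isotopy into Kirby moves you must arrange that only $\call$--components, never the $\cals_j$ with $j>m$, pass through the spheres $\Sigma_i$; otherwise the $\cals$ sublink of the resulting $S^3$ diagram would change, contrary to the statement, which keeps $\cals$ fixed. This is achievable because every crossing change you need involves at least one $\call$--strand, which can be taken to be the strand that moves, but it should be said explicitly.
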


\begin{proof}

The isotopy is constructed by repeatedly sliding  elements of $\{\call_i\}_{i\le n}$ over elements of $\{\cals_i\}_{i\le m}$.
On the left in Figure~\ref{fig:unknotslide}, a schematic diagram of a portion of the diagram for $L$ is presented.  In this portion of the diagram, it is possible that $i = j$.  In the full diagram, it is also possible that some of the $\cals_i$ for $i>m$ intersect the disks $D_i$ nontrivially.  What is essential is that full intersection of  $\call$ with $D_i$ consist of only the one point, $\call_i \cap D_i$.  

\begin{figure}[h]
 \fig{.4}{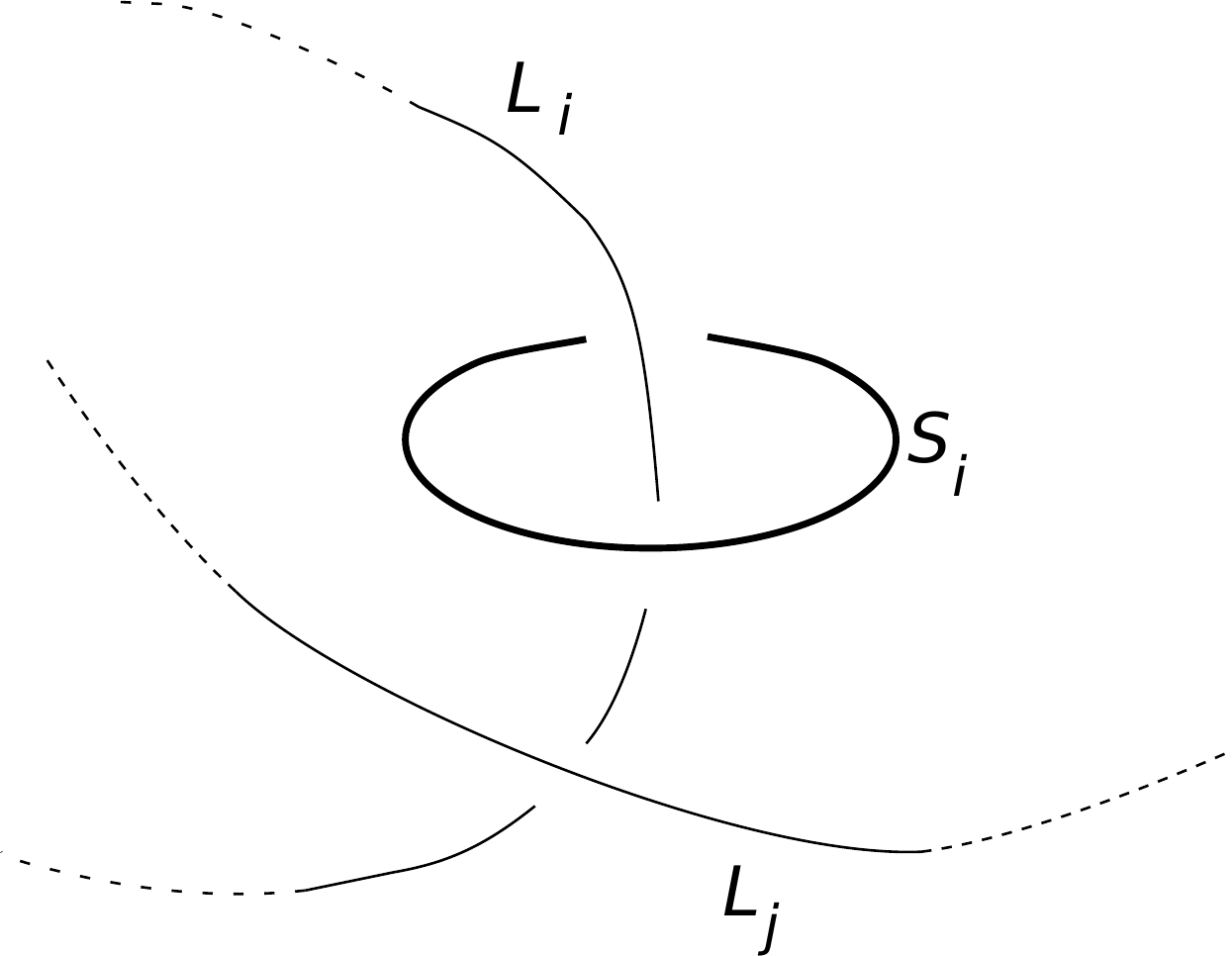}  \hskip.75in \fig{.4}{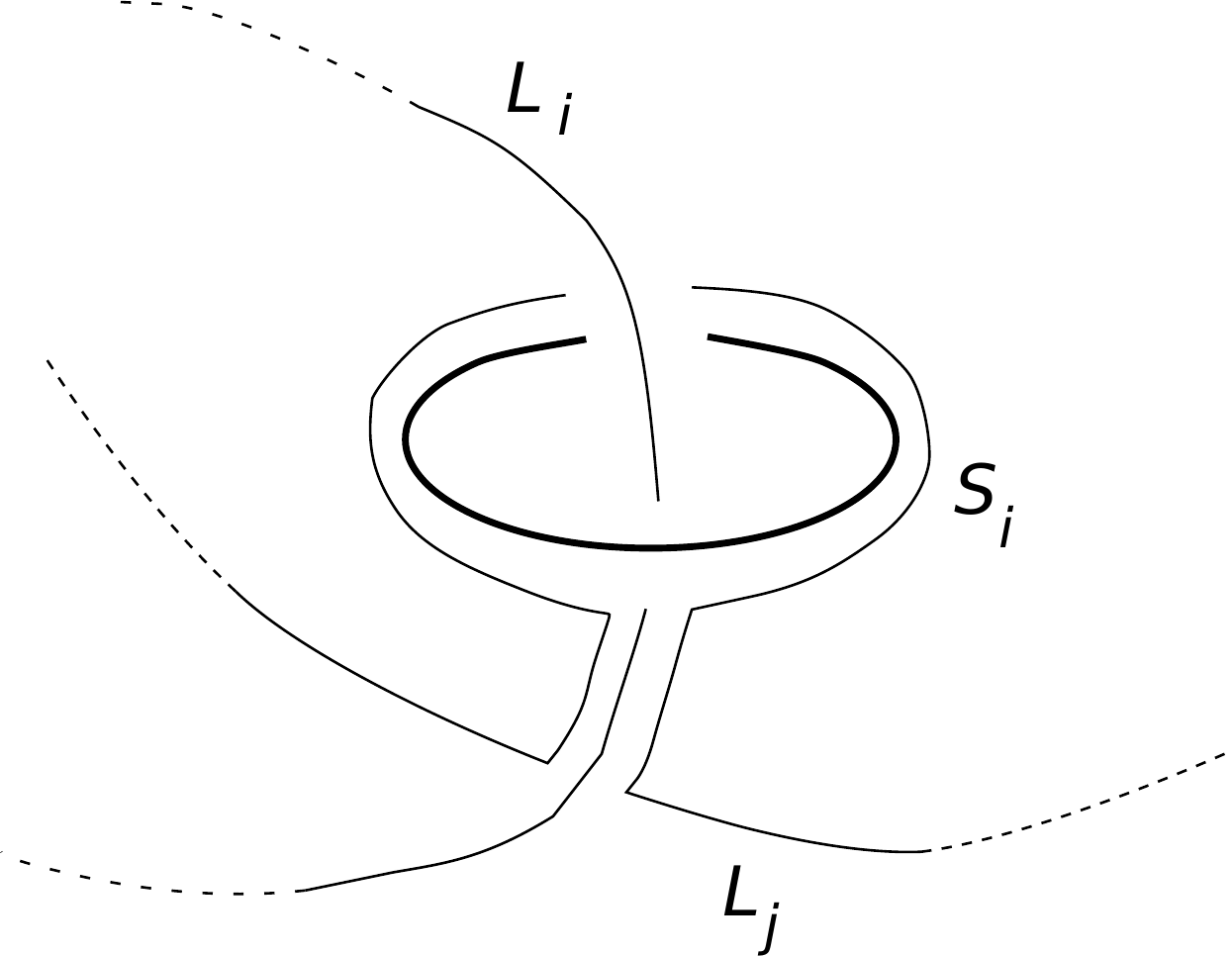} 
\caption{Sliding over a surgery curve to realize a crossing change.}
\label{fig:unknotslide}
\end{figure}

The schematic  on the right in  Figure~\ref{fig:unknotslide} illustrates the effect of sliding $\call_j$ over $\cals_i$; recall that $\cals_i$ has framing 0.  The   effect is to change one crossing between $\call_i$ and $\call_j$.  If $i = j$, then the framing of $\call_i$ is changed by $\pm 2$. (It is also possible that the linking of $\call_j$ with some $\cals_\alpha$ for $\alpha > m$ is changed, but the resulting link diagram continues to satisfy the initial conditions of the theorem.) It follows that arbitrary   crossing changes between components of   $\{\call_i\}_{i\le n}$ and components of $\{\call_i\}_{i\le m}$ can be performed so that in the new link, which we denote   $\{\call'_i\}_{i\le n}$, we have that  $\{\call'_i\}_{i\le m}$  is an unlink  separated from $\{\call'_i\}_{i > m}$.  Note that since no crossing changes between components of $\{\call_i\}_{i > m}$ were performed, $\{\call'_i\}_{i > m}  =  \{\call_i\}_{i > m}$. 

Once the link   $\{\call_i'\}_{i\le n}$ is  constructed, for $i \le m$ a Reidemeister move I can be preformed to $\call_i'$ to put a small left-handed kink in the diagram.  That crossing can be changed to be right-handed by sliding $\call_i'$ over $\cals_i$.  This  does not change the link type of     $\{\call_i'\}_{i\le n}$,  but it changes the framing of $\call_i'$ by 2.  Thus, this move and its inverse can change all framings by arbitrary multiples of 2.

\end{proof}


\section{Unknotting knots with a single pair of twists}

We begin by considering unknotting a single knot, offering a new perspective on   Ohyama's theorem. Our approach keeps track of framings.

\begin{theorem}[Unknotting Theorem] \label{thm:main1} Any knot $K$ in $S^3$ can be unknotted with two twists of opposite sign.  If $K$ has framing $f$, then in the diagram $\{\calk, \cals_1, \cals_2\}$ in which $\calk$ is unknotted, it can be arranged   that $\calk$ has framing $f'$ for any integer $f'$  satisfying $f + f' \equiv 1 \mod 2$. The absolute values of the linking numbers of the two unknotting curves with $K$ differ by 1.
\end{theorem}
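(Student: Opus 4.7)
The plan is to produce a surgery description $\{\calk, \cals_1, \cals_2\}$ of the framed knot $(K,f)$ in $S^3$ in which $\calk$ is unknotted with some framing $f'$, the pair $\cals_1\cup\cals_2$ is an unlink with framings $+1$ and $-1$, and the absolute values of $\lambda_i:=\text{lk}(\calk,\cals_i)$ differ by $1$. The framing bookkeeping falls out algebraically: blowing down $\cals_1$ (sign $+$) and then $\cals_2$ (sign $-$) shifts $\fr(\calk)=f'$ by $-\lambda_1^2+\lambda_2^2$ to yield $\fr(K)=f$, so $f-f' = \lambda_2^2-\lambda_1^2 \equiv \lambda_2-\lambda_1 \pmod{2}$. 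The condition $\bigl||\lambda_1|-|\lambda_2|\bigr|=1$ forces $\lambda_1,\lambda_2$ to have opposite parities and so makes $f-f'$ odd, matching $f+f'\equiv 1\pmod{2}$. Varying $|\lambda_2|$ over the nonnegative integers while keeping $|\lambda_1|=|\lambda_2|\pm 1$ realizes every odd value of $f-f'$, so every $f'$ allowed by the parity condition is attainable.

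To construct the diagram, I would start from any initial surgery description $\{\calk_0,\cals_1',\ldots,\cals_N'\}$ of $K$ with $\calk_0$ unknotted and each $\cals_i'$ a $\pm1$-framed crossing-change meridian; such a description exists because every knot can be unknotted by a finite sequence of crossing changes, each realized as $\pm1$-surgery on a small unknotted meridian bounding a disk that meets $K$ in two points with cancelling signs. The task is then to reduce the number of surgery components to two, of opposite signs. Inside the solid torus $V = S^3 \setminus \nu(\calk_0)$, I would merge pairs of like-sign $\cals_i'$s using handle slides together with blow-up/blow-down pairs of auxiliary $\mp 1$-framed meridians, invoking the Light Bulb Trick of Theorem~\ref{thm:unlink} to restore the unknottedness of the merged surgery curve at each stage. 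Linking numbers with $\calk_0$ add through the slides, which is precisely what allows the final $\lambda_i$'s to be large and to satisfy the required parity offset.

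Once the diagram $\{\calk,\cals_1,\cals_2\}$ is in place, the framing of $\calk$ is tuned to any prescribed $f'$ of the correct parity by the Reidemeister-I-plus-slide maneuver used at the end of the proof of Theorem~\ref{thm:unlink}, which shifts $\fr(\calk)$ by an arbitrary multiple of $2$ without changing anything else. The main obstacle is the merging step: keeping both surviving surgery curves $\cals_1,\cals_2$ unknotted in $S^3$ and mutually unlinked throughout the reduction. The Light Bulb Trick is the right tool for trading complicated embeddings for unlinks, but combining it with the blow-up/blow-down arithmetic needed to keep framings at $\pm 1$ while absorbing several unknots into one requires delicate bookkeeping of both framings and linking numbers, and this is where I expect the technical heart of the argument to lie.
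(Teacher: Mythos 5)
Your opening paragraph is fine as far as it goes, but it only verifies that the framing and linking-number claims are mutually \emph{consistent} under blow-downs; it constructs nothing. The actual content of the theorem lives in your second paragraph, and there it has a genuine gap: the reduction from $N$ crossing-change circles to two $\pm 1$-framed unknotted curves \emph{is} the theorem, and you do not carry it out. Concretely, if $\cals_i'$ and $\cals_j'$ are both $+1$-framed and you slide one over the other, the framing of the slid curve becomes $2 + 2\operatorname{lk}(\cals_i',\cals_j')$, which is even and hence never $\pm 1$; the ``blow-up/blow-down pairs of auxiliary meridians'' you invoke to repair this are exactly the delicate step you defer, and you offer no reason the process terminates with two curves of framings $+1$ and $-1$. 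There is also a linking-number obstruction to the plan as stated: every crossing-change circle has linking number $0$ with $\calk_0$, and handle slides of surgery curves over one another preserve this, so absent a concrete mechanism your two surviving curves would have $\lambda_1=\lambda_2=0$, contradicting the clause that $|\lambda_1|$ and $|\lambda_2|$ differ by $1$. Your remark that ``linking numbers add through the slides'' does not help, since $0+0=0$.

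The paper avoids all of this by never taking an unknotting sequence for $K$. It begins with a single cancelling pair of $\pm 1$-framed unknots positioned so that, after sliding the $-1$-curve and then $\calk$ over the $+1$-curve, one obtains a surgery curve $\cals_1$ with framing $0$ bounding a disk meeting $\calk$ in exactly one point. Theorem~\ref{thm:unlink} (the Light Bulb Trick) then unknots $\calk$ directly by sliding it over $\cals_1$: with $\alpha$ slides the framing becomes $f+2\alpha+1$ and the linking numbers with the two surgery curves become $-\alpha$ and $\alpha+1$, whose absolute values differ by $1$; a final slide of $\cals_1$ over $\cals_2$ makes $\{\cals_1,\cals_2\}$ an unlink with framings $-1$ and $+1$. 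The two surgery curves are present from the start, so no merging is ever required, and the arbitrariness of $\alpha$ (via the kink-and-slide move) realizes every $f'$ of the correct parity. To salvage your route you would need to prove the merging lemma, which is essentially Ohyama's theorem itself and is harder than the statement you are trying to establish.
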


\begin{proof} 
Figure~\ref{fig1} is a schematic   framed    link diagram of a knot $K$ in $S^3$.  Sliding the $-1$ framed component over the $+1$ framed component, and then sliding $\calk$ over the $+1$ framed component yields a diagram as shown on the right in Figure~\ref{fig1}.

\begin{figure}[h]
\hskip-3.3in\fig{.8}{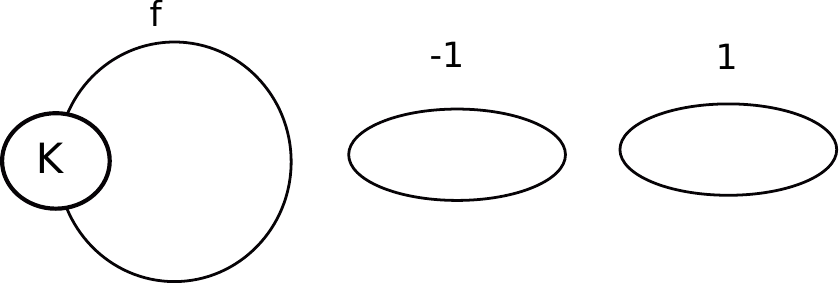}\\
\vskip-.55in\hskip.7in$\xrightarrow{\hspace*{.5in}}$\\
\vskip-.55in\hskip4in \fig{.8}{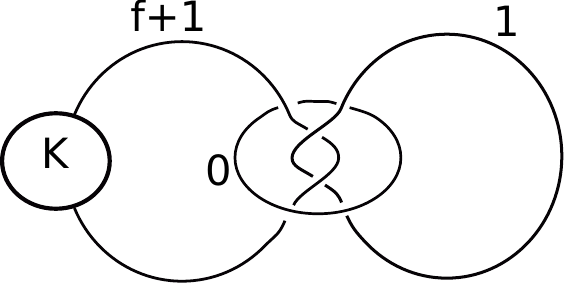}\\
\caption{Modifying a surgery diagram of $K$}
\label{fig1}
\end{figure}

The framing/linking matrix is given by 
$$
\begin{pmatrix}
f +1& 1 &1 \\
1 & 0 & 1 \\
1 & 1 &1 \\
\end{pmatrix}
$$

If we denote this surgery diagram for $K$ by  $\{\calk , \cals_1, \cals_2\}$, with $\cals_1$ denoting the 0--framed surgery component, then it satisfies the conditions of Theorem~\ref{thm:unlink}, and thus we can unknot $\calk$.  If the number of slides of $\calk$ over $\cals_1$ is (algebraically) $\alpha$, the linking matrix for the resulting link   $\{\calk', \cals_1, \cals_2\}$ is given by

$$
\begin{pmatrix} 
f +2\alpha+1& 1 &\alpha+1 \\
1 & 0 &1 \\
\alpha+1 & 1 & 1 \\
\end{pmatrix}.
$$\vskip.05in
 
 Sliding the $0$--framed curve over the $1$--framed surgery curve converts $\{\cals_1, \cals_2\}$ into an unlink with framings $-1$ and $-1$.  The resulting framing/linking matrix becomes
 
$$
\begin{pmatrix} 
f +2\alpha+1& -\alpha &\alpha+1 \\
-\alpha & -1 &0\\
\alpha+1 & 0 & 1 \\
\end{pmatrix}.
$$\vskip.05in

 As in the proof of Theorem~\ref{thm:unlink}, by placing kinks in $\calk$ and then sliding it over $\cals_1$, the choice of $\alpha$ is seen to be arbitrary.

\end{proof}


\section{Null-homologous unknotting}

\begin{theorem}\label{thm:main2} If $g(K) = g$, then $K$ can be unknotted with $2g$ null-homologous twists.
\end{theorem}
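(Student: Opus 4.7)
My plan is to prove the theorem by induction on the genus $g$, with the inductive step showing that any knot of genus $g \geq 1$ can be converted into a knot of genus at most $g-1$ by two null-homologous twists. The base case $g = 0$ is trivial (the unknot requires zero twists), and iterating the inductive step $g$ times produces the required $2g$ null-homologous twists.

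For the inductive step, I would begin with a minimal-genus Seifert surface $F$ for $K$. Since $g \geq 1$, I can choose a pair of simple closed curves $\alpha, \beta \subset F$ forming a geometric symplectic basis for a nontrivial genus-one summand of $F$, so that $\alpha$ and $\beta$ intersect transversely in a single point on $F$. Pushing each curve slightly off $F$ in its normal direction yields $\alpha^+, \beta^+ \subset S^3 \setminus F$. Because they lie off $F$, each has linking number zero with $K = \partial F$, and hence is null-homologous in $S^3 \setminus K$.

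Next, following the strategy of Theorem~\ref{thm:main1}, I would build an initial surgery diagram of $K$ incorporating $\alpha^+$ and $\beta^+$ together with auxiliary $\pm 1$-framed components that blow up and down to encode twisting operations. The setup would be arranged so that (i) one of the surgery components becomes a 0-framed unknotted meridian of $\calk$, enabling the application of Theorem~\ref{thm:unlink} (the Light Bulb Trick); (ii) after the Light Bulb Trick unknots $\calk$, further handle slides and blow-downs leave exactly two $\pm 1$-framed twist curves; and (iii) a bookkeeping computation of the linking matrix, in the style of the $3 \times 3$ matrix computations in the proof of Theorem~\ref{thm:main1}, confirms that both remaining twist curves have linking number zero with the new $\calk$. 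The combined effect of these two twists compresses $F$ along the chosen handle, so the resulting knot bounds a Seifert surface of genus $g-1$, completing the inductive step.

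The main obstacle is the simultaneous constraint at the end of the inductive step: the Light Bulb Trick must apply \emph{and} the two resulting $\pm 1$-framed twist curves must each be null-homologous with respect to $\calk$. In Theorem~\ref{thm:main1} the two twist curves necessarily had linking numbers with $\calk$ differing by $1$, so the null-homologous condition could not hold for both; here the extra flexibility afforded by starting from pushed-off curves on the Seifert surface (which automatically link $K$ trivially) should provide the correction, but the detailed tracking of the linking matrix through each slide and blow-down is the delicate part of the argument.
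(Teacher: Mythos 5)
Your overall shape is right --- the paper also spends two null-homologous twists per handle of a genus-$g$ Seifert surface, just doing all $g$ handles simultaneously rather than by induction --- but the two points on which the whole theorem turns are exactly the ones you leave unproved, and one of them you set up incorrectly. You say the construction should be arranged so that ``one of the surgery components becomes a 0-framed unknotted meridian of $\calk$.'' A meridian of $\calk$ has linking number $1$ with $\calk$, and that is precisely why in Theorem~\ref{thm:main1} the two twist curves have linking numbers differing by $1$ and can never both be null-homologous; repeating that setup cannot give the null-homologous version no matter how carefully you track the linking matrix. The paper's fix is that the $0$-framed curve is a meridian of a \emph{band} of the Seifert surface: the boundary $K$ runs through each band twice with opposite orientations, so such a meridian (and everything obtained from it by the subsequent slides) automatically links $K$ zero times. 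Correspondingly, Theorem~\ref{thm:unlink} is applied not to unknot $\calk$ but to unknot and unlink the band curves $a_i$ and split them from the pushoffs $\bar b_i$, after which the surface compresses. Your pushed-off curves $\alpha^+,\beta^+$ do have linking number zero with $K$, but they play no identifiable role in your construction --- they are not the twist curves and not the curves being unknotted --- so the ``extra flexibility'' you invoke at the end is never actually cashed in.

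The second missing ingredient is the framing obstruction. To compress $F$ along $a_i$ you need $a_i$ not merely unknotted and split off, but with surface framing $0$. The light-bulb slides change a framing only by even integers, and the initial blow-up shifts its parity, so framing $0$ is reachable only when the Seifert self-pairing $V(a_i,a_i)$ is odd. When $V(a_i,a_i)$ and $V(b_i,b_i)$ are both even one must first pass to the symplectic basis $\{a_i+b_i,\,b_i\}$, using $V(x,x)+V(y,y)+V(x,y)+V(y,x)\equiv V(x,y)-V(y,x)= x\cap y =1 \pmod 2$. Nothing in your argument addresses this, and without it the compression step simply fails for handles of even framing. A smaller structural point: your induction produces twist curves chosen sequentially for a sequence of different knots, whereas the paper's definition of unlinking requires all $2g$ curves to sit in one diagram as an unlink; the simultaneous construction in the paper avoids having to re-embed the later curves in the earlier diagrams.
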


\begin{proof} 

Let $F$ be a Seifert surface for $K$ built from a disk by adding $g$ pairs of bands.  Then $F$ can be  illustrated as in Figure~\ref{fig2}.  Each of the bands is drawn with  a small gap in it, indicating that the bands are perhaps knotted, linked together, and twisted.  There is a  symplectic basis of $H_1(F)$ represented by simple closed curves  built from the cores of those bands.  We denote these curves by  $\{a_1, b_1, \ldots , a_g, b_g\}$, as shown. 

\begin{figure}[h]  \fig{.8}{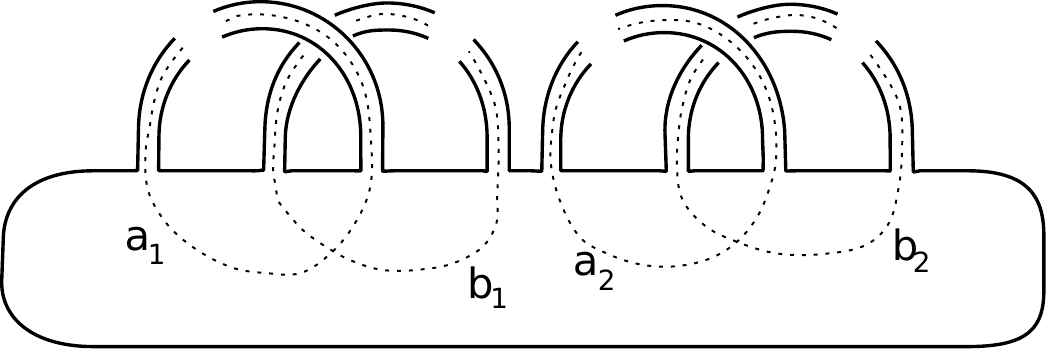}\\
\caption{Basic surface}
\label{fig2}
\end{figure}

 Let $\bar{b}_i$ denote the negative push-offs of the $b_i$ from $F$.  Thus, in Figure~\ref{fig2}   the visible portion of $\overline{b_i}$ lies under $a_i$.  If the link $\{a_1, \ldots , a_g\}$ forms an unlink with all Seifert framings 0,   as shown in Figure~\ref{fig5},   and that unlink lies everywhere over  $\{ \bar{b}_1 , \ldots , \bar{b}_g\}$, then $K$ is an unknot: the Seifert surface can be ambiently surgered to form a disk.   Figure~\ref{fig5} is drawn to shown the $a_i$ forming a trivial link along which $F$ can be surgered.  The $b_i$ bands are still drawn broken, to indicate that they are perhaps knotted, twisted, and link, but it is assumed they pass everywhere under the $a_i$ bands.
 
 \begin{figure}[h]  \fig{.8}{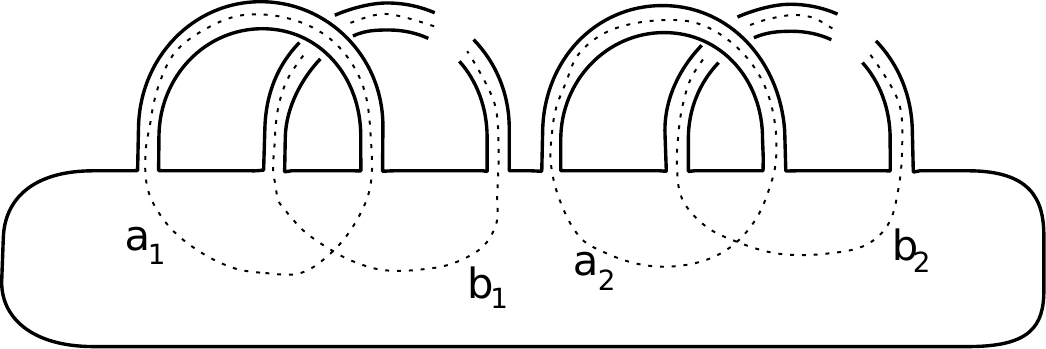}\\
\caption{Basic surface}
\label{fig5}
\end{figure}

Thus, to unknot  $K$, we can proceed as follows.  Introduce $g$ pairs of two component unlinks  in the complement of $F$ and perform $+1$ and $-1$ surgery on each.  This is illustrated in Figure~\ref{fig3}.  As in the proof of the unknotting theorem, Theorem~\ref{thm:main1}, we can slide each $-1$ curve and an $a_i$ curve (and its corresponding band) over a $+1$ surgery curve to arrange that each $a_i$ band has a small linking circle on which 0--surgery is performed.  We are now in the setting of Theorem~\ref{thm:unlink}, with the set  $\{a_i\}_{i\le g}$ corresponding to the set  $\{\call_i\}_{i\le m}$.  However, instead of sliding the $a_i$, we work with the corresponding $a_i$ bands. 

As in the earlier arguments, a sequence of slides (of the $a_i$ bands) over the  $0$--framed linking curves can ensure that: (1) each $a_i$ curve is unknotted; (2) the set of $a_i$ curves forms an unlink, and finally; (3) by sliding $b_i$ curves over the $0$--framed surgery curves, that  the link formed from the  $\overline{b}_i$ curves is split from the link formed from the $a_i$ curves, lying completely beneath it.

\begin{figure}[h]  \fig{.8}{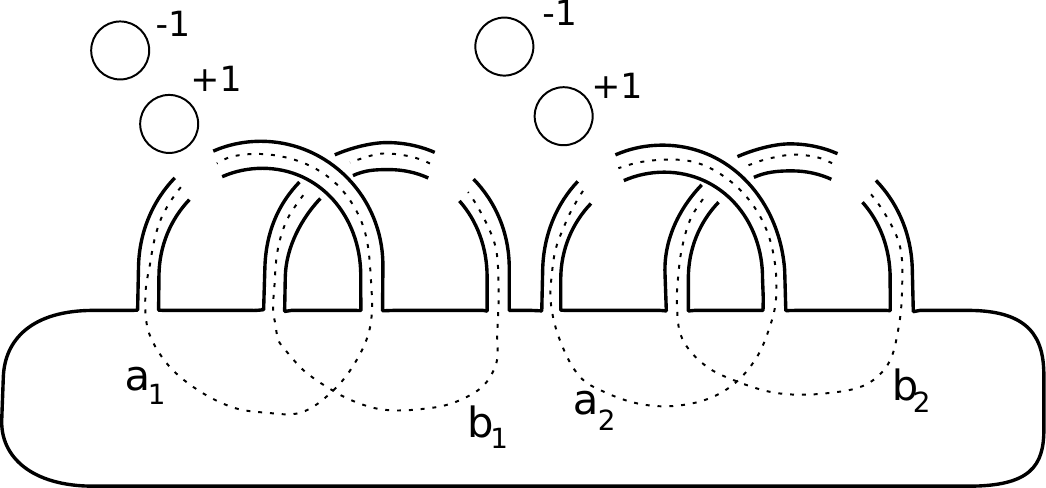}\\
\caption{Basic surface}
\label{fig3}
\end{figure}

It remains to arrange that the $a_i$ curves all have framing 0.   In Theorem~\ref{thm:main1}, we saw that the framings of the $a_i$ can be changed by any odd integer.  Thus, if the initial Seifert pairing of each $a_i$ curve were odd, then we would be done.  Consider each pair $\{a_i, b_i\}$.  If either has framing odd,     perhaps after switching the labels, we would be able to proceed.   If both are even, we can consider a new symplectic basis $\{a_i + b_i, b_i\}$.  Letting $V_{\calk}$ denote the Seifert form, we have

\begin{equation*}
\begin{split}
V_\calk(a_i +b_i , a_i +b_i) & =  V_\calk(a_i,a_i) +V_\calk(a_i,b_i) +V_\calk(b_i a_i) + V_\calk(b_i,b_i)  \\
& \equiv V_\calk(a_i,b_i) -V_\calk(b_i , a_i) \mod 2 \\
&\equiv a_i \cap b_i  \equiv 1 \mod 2.   
\end{split}
\end{equation*}

By using a symplectic basis in which $a_i + b_i$ is one of the basis elements, we have arranged that it has odd framing.  Thus, the argument is complete.

\end{proof} 


\section{$2g$  Converse}

\begin{theorem}\label{thm:0linking}  For every $g >0$, there exists a genus $g$ knot that cannot be unknotted with fewer than $2g$ null-homologous twists. 
\end{theorem}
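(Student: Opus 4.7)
My plan is to take $K = T(2,2g+1)$, the right-handed $(2,2g+1)$-torus knot, as the example. This knot satisfies $g(K)=g$, $\sigma(K)=-2g$, and $\tau(K)=g$, where $\tau$ denotes the Ozsv\'ath--Szab\'o concordance invariant from knot Floer homology.

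The first step is to translate the unknotting hypothesis into four-dimensional language using the observation recalled in the introduction. If $K$ can be unknotted by $k$ positive and $j$ negative null-homologous twists, then $K$ bounds a smoothly embedded null-homologous disk $D$ in the punctured connected sum
\[
W = \#_k \cc\pp^2 \,\#_j\, \overline{\cc\pp}^2 \setminus B^4.
\]
It therefore suffices to show that, whenever such a disk $D$ exists, $k+j \ge 2g$.

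I would then invoke two independent concordance inequalities, each valid in this null-homologous setting. The Ozsv\'ath--Szab\'o inequality for $\tau$ across $\cc\pp^2$-blow-ups gives $-k \le \tau(K) \le j$ for a null-homologous disk in $W$; with $\tau(K) = g$ this forces $j \ge g$. The Murasugi--Tristram-type signature inequality for null-homologous surfaces in these 4-manifolds gives $-2k \le \sigma(K) \le 2j$; with $\sigma(K) = -2g$ this forces $k \ge g$. Adding the two lower bounds yields $k+j \ge 2g$, as required. Equivalently, in the sequential formulation, each positive twist changes $\sigma$ by at most $+2$ and each negative twist changes $\tau$ by at most $-1$, so closing the gaps $\sigma(K)\to 0$ and $\tau(K)\to 0$ requires at least $g$ twists of each sign.

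The main obstacle is establishing the signature inequality cleanly in the null-homologous form stated. My plan for that bound is to analyze the double branched cover $\widetilde{W} = \Sigma_2(W, D)$: because $[D]=0$ in $H_2(W,\partial W)$, this cover is a smooth 4-manifold with $\sigma(\widetilde{W}) = 2(k-j)$, $b_2(\widetilde{W}) = 2(k+j)$, and $\partial \widetilde{W} = \Sigma_2(K)$. Gluing $\widetilde{W}$ across its boundary to $-\Sigma_2(B^4, F_K)$, for $F_K$ a minimal-genus Seifert surface of $K$, produces a closed simply-connected 4-manifold whose signature and Betti number bookkeeping---together with Donaldson's diagonalization theorem when the resulting form is definite, or a $d$-invariant argument on the lens space $\Sigma_2(K)$ more generally---yields the needed inequality $-2k \le \sigma(K) \le 2j$.
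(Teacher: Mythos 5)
There is a genuine gap, and in fact the conclusion of your argument is already false for $g=1$: the right-handed trefoil $T(2,3)$ has genus $1$ but can be unknotted with a \emph{single} null-homologous twist. In its twist-knot (pretzel) diagram the trefoil is an unknotted band closed up with a clasp; changing one crossing of the clasp unknots it, and the two strands at that crossing are antiparallel, so the encircling surgery circle has linking number $0$ with the knot. Since your two inequalities together would force $k+j\ge 2$ for the trefoil, at least one of them must be false. The one that fails is the claimed two-sided, quantitative form. For a \emph{null-homologous} surface $\Sigma$ in $\#_j\overline{\cc\pp}^2\setminus B^4$ the Ozsv\'ath--Szab\'o bound reads $2\tau(K)+|[\Sigma]|+[\Sigma]\cdot[\Sigma]\le 2g(\Sigma)$, and the terms carrying the dependence on the number of blow-ups vanish when $[\Sigma]=0$; what survives is the one-sided statement $\tau(K)\le 0$ (and its mirror), with no control on how far a single twist of the ``favorable'' sign can move $\tau$. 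Likewise, the branched-cover signature bookkeeping you describe gives $|2(k-j)-\sigma(K)|\le 2(k+j)+2g(F_K)$, which is vacuous here, and the Donaldson/$d$-invariant refinements do not upgrade it to $-2k\le\sigma(K)\le 2j$. More fundamentally, $\sigma$ and $\tau$ of $T(2,2g+1)$ both detect the same positivity and are both repaired by twists of a single sign, so they cannot be added to force $g$ twists of \emph{each} sign; the best such invariants can give is a lower bound of $g$, and indeed $T(2,2g+1)$ can be unknotted with $g$ null-homologous twists (one clasp change per pair of bands of its standard genus-$g$ Seifert surface).

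The paper avoids this trap by using an invariant that charges every twist equally regardless of sign: if $K$ is unknotted with $k$ null-homologous twists, then $H_1(X_\infty(K);\F)$ is generated by at most $k$ elements over $\F[t,t^{-1}]$, because the twisting circles lift to the infinite cyclic cover and present the Alexander module with one $\F[t,t^{-1}]$-generator apiece. One then needs a genus-one knot whose Alexander module has rank $2$ over some $\F[t,t^{-1}]$; the paper takes the Seifert matrix $\left(\begin{smallmatrix}0&1\\2&0\end{smallmatrix}\right)$, whose Alexander polynomial $2t^2-5t+2\equiv 2(t+1)^2\pmod 3$ yields a rank-two module over $\F_3[t,t^{-1}]$, and connected sums give rank $2g$. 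Note that a torus knot could never serve this purpose: its Alexander module is cyclic, so this obstruction gives only the lower bound $1$. If you want to salvage a concordance-theoretic route, you would need an invariant with a genuine per-twist Lipschitz bound that is blind to the sign of the twist, not a pair of signed bounds on $\sigma$ and $\tau$.
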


\begin{proof} 

If $K$ can be unknotted with $k$ null-homologous twists, the first  homology of the infinite cyclic cover of $S^3 - K$, $H_1(X_\infty(K), {\mathbb F})$, has rank at most $k$ as an ${\mathbb F}[t, t^{-1}]$--module for any field $\mathbb F$.  (See~\cite[Chapter 7C]{MR0515288}.)  In general, this homology group is presented by $V - tV^{\sf T}$, where $V$ is a Seifert matrix for $K$.  If $K$ is a genus one knot with Seifert matrix 
$$ \begin{pmatrix}  
0 & 1 \\
2 & 0 \\
\end{pmatrix},
$$
then letting $\mathbb F_3$ be the field with three elements,  
$$H_1(X_\infty(K), {\mathbb F_3}) \cong \left( \frac{\mathbb F_3[t, t^{-1}]}
{\left<1- 2t\right>} \right) \oplus\left( \frac{\mathbb F_3[t, t^{-1}]} {\left<2 - t\right>} \right) \cong  \left( \frac{\mathbb F_3[t, t^{-1}]} {\left<t+1\right>} \right)^2 $$ as an $\F_3[t, t^{-1}]$--module.  This is of rank 2.

The knot $gK$ is now seen to have  $H_1(X_\infty(gK), {\mathbb F_3})$ of rank $2g$ as an  ${\mathbb F_3}[t, t^{-1}]$--module, and thus it provides the necessary example to conclude the proof.
\end{proof}

\section{Concluding remarks}
There is an interesting point of overlap between our null-homologous unknotting result and Ohyama's original theorem.  That is in the case of $g(K) = 1$.  For such a knot, Ohyama says it can be unknotted with a positive and negative twist   with linking  numbers  $(k, k \pm 1)$ for {\it some} integer $k$.  Our result says that it can be unknotted with such twists of linking numbers $(0,0)$.  It is not difficult to modify a proof of Ohyama's theorem to show that for every  knot $K$ and for {\it every} value of $k$, there is a $(k,k \pm 1)$ unknotting of $K$.   

One question is to determine for each knot $K$, for which pairs $(m,n)$  there is an unknotting with these linking numbers.  Notice that for any integers $m$ and $n$, the connected sum of torus knots $T(|m|,  m \pm 1) \cs T(|n|, n\pm 1)$ can be unknotted with a pair of twists of linking numbers $(|m|,|n|)$, with the signs of the twists determined by the signs of $m$ and $n$.   On the other hand, for any given knot, there are potential  obstructions to unknotting with linking numbers any given pair $(m,n)$.  If $g(K) =1$, it seems possible that the condition is $|k - m| \le 1$.  These issues will be the subject of further research.

Such questions can be considered from a four-dimensional perspective.  According to~\cite{MR0246309, MR0248851}, every knot bounds a smoothly embedded disk in  $ X =   \cc \pp^2 \#  \overline{\cc \pp}^2 \setminus B^4$, and by Ohyama's result, this can be improved:  for every integer $k$, any knot $K$ bounds a disk in   $ X$ that represents $(k, k \pm 1) \in H_2(  X , \partial X)$.  If $g(K) = 1$, then it also bounds a disk representing $(0, 0) \in H_2(  X , \partial X)$.   Again, what other possibilities can occur? 



\bibliography{../../BibTexComplete.bib}
\bibliographystyle{plain}

\end{document}